\pdfoutput=1
\documentclass{article}
\usepackage{amsmath,amsfonts,amssymb,amsthm,mathtools}
\usepackage{microtype}
\usepackage{url,hyperref}
\usepackage{indentfirst}
\hypersetup{colorlinks, linkcolor={red}, citecolor={blue}, urlcolor={black}}
\usepackage{comment}
\usepackage{hyperref}
\usepackage{cleveref}
\usepackage{enumerate,enumitem}
\usepackage{tikz}
\usepackage[margin=1.25in]{geometry}
\usepackage[T1]{fontenc}
\usepackage{adjustbox}
\usepackage{subfig}
\usepackage{graphicx}

\allowdisplaybreaks

\newtheorem{theorem}{Theorem}[section]
\newtheorem{lemma}[theorem]{Lemma}

\newtheorem{corollary}[theorem]{Corollary}

\def\ex{\mathrm{ex}}

\theoremstyle{definition}

\begin{document}

\title{A note on hypergraph extensions of Mantel's theorem}
\date{}

\author{
Jie Ma\thanks{School of Mathematical Sciences, University of Science and Technology of China, Hefei, Anhui 230026, and Yau Mathematical Sciences Center, Tsinghua University, Beijing 100084, China.
Research supported by National Key Research and Development Program of China 2023YFA1010201 and National Natural Science Foundation of China grant 12125106. Email: {\tt jiema@ustc.edu.cn}.}
\and
Tianming Zhu\thanks{School of Mathematical Sciences, University of Science and Technology of China, Hefei, Anhui 230026, China. Research supported by Innovation Program for Quantum Science and Technology 2021ZD0302902.
Email: {\tt zhutianming@mail.ustc.edu.cn}.}
}

\maketitle

\begin{abstract}
Chao and Yu \cite{chao2024entropymeetsturannew} introduced an entropy method for hypergraph Tur\'an problems, and used it to show that the family of $\lfloor k/2\rfloor$ $k$-uniform {\it tents} have Tur\'an density $k!/k^k$. 
Il'kovi\v{c} and Yan \cite{ilkovivc2025improved} improved this by reducing to a subfamily of $\lceil k/e\rceil$ tents. 
In this note, enhancing Il'kovi\v{c}-Yan's result, we give a significantly shorter entropy proof, with optimal bounds within this framework.
\end{abstract}

\section{Introduction}\label{SEC:Intorduction}
\noindent 
The {\it Tur\'{a}n number} $\ex(n,\mathcal{F})$ of a family $\mathcal{F}$ of $k$-uniform hypergraphs ({\it $k$-graphs} for short) denotes the maximum number of edges in an $n$-vertex $k$-graph not containing any member of $\mathcal{F}$ as its subgraph. 
Its {\it Tur\'{a}n density} is given by $\pi (\mathcal{F} ) \coloneqq \lim_{n\to \infty}\text{ex}(n,\mathcal{F})/\binom{n}{k}.$
For integers $n$, let $[n]:=\{1,2,\ldots, n\}.$

The study of Tur\'an number and Tur\'an density of graphs and hypergraphs is one of the central topics in Extremal Combinatorics. 
While the celebrated theorems of Tur\'an \cite{turan1941external} and Erd\H{o}s-Stone-Simonovits \cite{erdos1966limit} completely characterize Tur\'an densities for all graph families,
the hypergraph setting remains largely open, with exact densities known only for very few cases (see \cite{keevash2011hypergraph}). 
Significant progress has been made through sustained investigations of hypergraph extensions of Mantel's theorem concerning $\mathrm{ex}(n,K_3)$. 
These extensions have identified many families of $k$-graphs with Tur\'an density $k!/k^k$. 
This particular value has attracted special attention, owing in part to Erd\H{o}s's famous conjecture regarding whether $k!/k^k$ is a jump for $k$-graphs (see \cite{frankl1984hypergraphs} for details).  
To proceed, for each $i\in [k]$, we define the {\it $(k-i,i)$-tent} $\Delta_{(k-i,i)}$ to be the $k$-graph with vertex set $[2k-1]$ and edge set
\begin{equation}\label{equ:i-tent}
    \big\{ \{1,2,\ldots k\},\{1,\ldots, i,k+1,\ldots, 2k-i  \},\{i+1,\ldots ,k+1,2k-i+1,\ldots, 2k-1\} \big\}.
\end{equation}
More generally, for any partition $\lambda=(\lambda_1,\ldots,\lambda_\ell)$ of $k$ with $\lambda_1\ge\ldots \ge \lambda_\ell\geq 1$, 
the {\it $\lambda$-tent} $\Delta_\lambda$ denotes the $k$-graph with $(k-1)\ell +1$ vertices and $\ell+1$ edges $e,e_1,\ldots e_\ell$ defined as follows:
\begin{itemize}
    \item There exists a vertex $v$ (called {\it apex}) such that $e_i \cap e_j = \{v\}$ for all $1 \leq i < j \leq \ell$;
    \item The subsets $e \cap e_1, \ldots, e \cap e_\ell$ form a partition of $e$, where $|e\cap e_i|=\lambda_i$ for each $i\in [\ell]$.
\end{itemize}
Frankl and F\"uredi \cite{frankl1983new} were the first to determine the exact Tur\'an number for a hypergraph, where they showed $\pi(\Delta_{(2,1)})=3!/3^3$. 
Pikhurko \cite{pikhurko2008exact} proved $\pi(\Delta_{(3,1)})=4!/4^4$ and the exact Tur\'an number for large $n$. 
Generalizing a result of Mubayi \cite{M06}, Mubayi and Pikhurko \cite{mubayi2007new} determined  $\pi(\Delta_{(1,1,\ldots,1)})=k!/k^k$.
Recently, Chao and Yu \cite{chao2024entropymeetsturannew} developed an innovative entropy-based approach to hypergraph Tur\'an problems. 
Applying this method, they \cite{chao2024entropymeetsturannew} proved the following generalization of Mantel's theorem: for every $k \geq 2$, 
the family 
$\mathcal{F}_{k} := \{\Delta_{(k-i,i)} : 1 \leq i \leq \lfloor k/2 \rfloor\}$ satisfies $\pi(\mathcal{F}_{k}) = k!/k^{k}$.
This implies that for any $1\leq q\le \lceil k/2\rceil$, $\pi(\Delta_{(q,1,\ldots,1)})=k!/k^k$, generalizing the above results of \cite{frankl1983new,M06,mubayi2007new}.
Combining the entropy method with other techniques, Liu \cite{liu2025hypergraph} determined the exact Tur\'an number of $\mathcal{F}_{k}$ for large $n$.
For each $1\leq s\leq \lfloor k/2 \rfloor$, define
\begin{equation}
\mathcal{F}_{k}^{\leq s}= \{\Delta_{(k-i,i)} : 1 \leq i\leq  s\},   
\end{equation}
Very recently, Il'kovi\v{c} and Yan \cite{ilkovivc2025improved} improved both of these results in \cite{chao2024entropymeetsturannew,liu2025hypergraph} by determining $\pi(\mathcal{F}_{k}^{\leq \lceil k/e \rceil}) = k!/k^{k}$ for $k\geq 4$ and the exact Tur\'an number of $\mathcal{F}_{k}^{\leq \lceil k/e \rceil}$ for large $n$.

In this note, building on the entropy method of Chao and Yu \cite{chao2024entropymeetsturannew}, we provide a much short proof of the above Tur\'an density results with a slightly better bound.
Our main result is as follows. 
Throughout this note, for any integer $k\geq 2$, define
\begin{equation}\label{equ:t(k)}
\mbox{ $t(k)$ to be the largest integer $s$ such that } 1/s+1/(s+1)+\cdots+1/k>1.
\end{equation}

\begin{theorem}\label{THM:Main}
For every integer $k \geq 2$, $\pi(\mathcal{F}_k^{\le t(k)}) = k!/k^k.$
\end{theorem}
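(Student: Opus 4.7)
The plan is to follow the entropy method introduced by Chao and Yu \cite{chao2024entropymeetsturannew}. First, the lower bound $\pi(\mathcal{F}_k^{\le t(k)}) \ge k!/k^k$ is immediate: one quickly checks from the definition that $t(k) \le \lfloor k/2 \rfloor$ for every $k \ge 2$ (the sum $\sum_{i = \lfloor k/2 \rfloor + 1}^{k} 1/i$ is bounded above by $5/6 < 1$, with the maximum attained at $k=3$), so $\mathcal{F}_k^{\le t(k)} \subseteq \mathcal{F}_k$ and the Chao--Yu equality $\pi(\mathcal{F}_k) = k!/k^k$ transfers. The real content lies in the matching upper bound.

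For the upper bound, let $G$ be an $n$-vertex $\mathcal{F}_k^{\le t(k)}$-free $k$-graph and let $\mathbf{X} = (X_1, \ldots, X_k)$ be a uniformly random ordered edge of $G$, obtained by picking a uniformly random edge together with a uniformly random ordering. Then $H(\mathbf{X}) = \log(k! \cdot e(G))$, and the target is
\[
H(\mathbf{X}) \le \log k! + k \log(n/k) + o(\log n),
\]
which rearranges to $e(G) \le (1 + o(1))(k!/k^k)\binom{n}{k}$. By the chain rule combined with symmetrization over permutations of $[k]$, $H(\mathbf{X})$ decomposes as an average of conditional entropies $\mathbb{E}_S\, H(X_j \mid X_S)$ over random subsets $S \subseteq [k]$; this is the flexibility the method will exploit.

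The core of the proof is a per-tent entropy saving. For each $i \in [t(k)]$, the absence of $\Delta_{(k-i, i)}$ in $G$ yields a quantitative improvement over the trivial bound. The mechanism: given a random $(k-i)$-prefix of $\mathbf{X}$, introduce an auxiliary random vertex $V$ playing the role of a candidate apex, sampled from the appropriate link structure. The tent-free condition forbids $V$ from simultaneously being the apex of two extensions -- one sharing the prefix part, one sharing the complementary suffix -- and this prohibition translates, via standard entropy inequalities, into a saving on the conditional entropy of the suffix given the prefix. Aggregating these savings across all $i \in [t(k)]$ and matching them against the deficit $k \log k - \log k!$ from the trivial baseline, the harmonic condition $\sum_{i = t(k)}^{k} 1/i > 1$ is precisely what ensures the savings are sufficient; this is what makes $t(k)$ the optimal threshold within this framework.

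The principal difficulty is setting up the per-index entropy inequality correctly. The tent is a three-edge configuration anchored at an apex (not a single-intersection condition), so a naive codegree bound is insufficient and the auxiliary vertex $V$ must be coupled with $\mathbf{X}$ in a way that exploits the forbidden three-edge structure cleanly. A subtler point is aggregating savings across different values of $i$ without interference, since the same auxiliary variable may play the apex role for multiple values of $i$; one must decompose $H(\mathbf{X})$ into pieces whose savings are genuinely independent. Getting this decomposition right -- and verifying that it yields exactly the harmonic threshold defining $t(k)$ -- is the main technical challenge, and its clean resolution is precisely what makes the proof short relative to the prior approaches.
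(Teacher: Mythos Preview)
Your proposal is a plan, not a proof: it names the right architecture (entropy of a random ordered edge, one constraint per forbidden tent, an aggregation step where the harmonic threshold appears) but supplies none of the inequalities that actually do the work. Two concrete steps are missing.

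First, the tent-free condition has to be converted into a usable analytic constraint. In the paper this is done via the Chao--Yu ratio sequence $x_i = 2^{\mathbb{H}(X_i\mid X_{i+1},\ldots,X_k)-\mathbb{H}(X_i)}$: the absence of $\Delta_{(k-i,i)}$ (for each $i\le t$) yields the subadditivity $x_i+x_j\le x_{i+j}$ whenever $i\in[t]$ and $i+j\le k$ (this is Lemma~\ref{LEMMA:CY-CONSTRAINT}). Your sentence ``this prohibition translates, via standard entropy inequalities, into a saving on the conditional entropy of the suffix given the prefix'' gestures at this but never states, let alone proves, any such inequality; and your formulation as an additive ``saving'' per index $i$ does not obviously match the multiplicative/subadditive form that is actually needed.

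Second, and more seriously, the whole content of the theorem sits in the optimization step that follows: under the constraints $x_i+x_j\le x_{i+j}$ for $i\le t(k)$, show $\prod_{i=1}^k x_i\le k!/k^k$ (with $x_k=1$). This is Lemma~\ref{LEMMA:Final-Inequality}, and the paper proves it by a short combinatorial partition argument (Lemma~\ref{LEMMA:Algorithm}): the multiset containing $k!/i$ copies of each $i\in[k]$ can be split into $k!$ blocks, each summing to $k$ and containing at most one element larger than $t$; this is feasible precisely because $\sum_{i=t+1}^k 1/i\le 1$, which is where the harmonic definition of $t(k)$ enters. Applying the subadditivity along each block gives $\sum_{i=1}^k y_i/i\le y_k$, and AM--GM finishes. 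Your proposal contains nothing resembling this step; the phrase ``aggregating these savings \ldots\ the harmonic condition is precisely what ensures the savings are sufficient'' is an assertion, not an argument, and you yourself flag that ``getting this decomposition right \ldots\ is the main technical challenge'' without attempting it.

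In short: the lower bound and the general shape are fine, but the upper bound as written has no content beyond what is already in Chao--Yu. What you need to add is (i) the explicit ratio-sequence constraint coming from each tent, and (ii) a proof of the real inequality $\prod y_i\le (k!/k^k)y_k^k$ under those constraints---the latter is where the improvement from $\lfloor k/2\rfloor$ to $t(k)$ actually lives.
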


Note that the integer $t=t(k)$ satisfies $1 < \sum_{i=t}^k \frac{1}{i} \le \int_{t-1}^k \frac{1}{x} dx = \ln\frac{k}{t-1}$,
which implies $t(k)\le \lceil k/e \rceil$.
So Theorem~\ref{THM:Main} generalizes the result of Il'kovi\v{c} and Yan \cite{ilkovivc2025improved} that
$\pi(\mathcal{F}_{k}^{\lceil k/e \rceil})=k!/k^k$ for all $k\ge 4$.
Table~\ref{table} compares $\lceil k/e \rceil$ and $t(k)$ for $4 \leq k \leq 19$.

\begin{table}[!ht]
\renewcommand\arraystretch{1.5}
    \begin{minipage}[c]{0.24\textwidth}
    \centering
        \begin{tabular}{l|c|c}
            \hline
        $k$ & $\lceil k/e \rceil$ & $t(k)$ \\ \hline
        4   &   2   &   2   \\
        5   &   2   &   2   \\
        6$^\ast$ &   3   &   2   \\
        7   &   3   &   3   \\
         \hline

    \end{tabular}
    \end{minipage}
    \begin{minipage}[c]{0.24\textwidth}
    \centering
        \begin{tabular}{l|c|c}
            \hline
        $k$ & $\lceil k/e \rceil$ & $t(k)$ \\ \hline
        ~8   &   3   &   3   \\
        ~9$^\ast$ &   4   &   3   \\
        10  &   4   &   4   \\
        11$^\ast$&   5   &   4   \\
        \hline

    \end{tabular}
    \end{minipage}
    \begin{minipage}[c]{0.24\textwidth}
    \centering
        \begin{tabular}{l|c|c}
            \hline
        $k$ & $\lceil k/e \rceil$ & $t(k)$ \\ \hline
        12  &   5   &   5   \\
        13  &   5   &   5   \\
        14$^\ast$&   6   &   5   \\
         15  &   6   &   6   \\
        \hline

    \end{tabular}
    \end{minipage}
    \begin{minipage}[c]{0.24\textwidth}
    \centering
        \begin{tabular}{l|c|c}
            \hline
        $k$ & $\lceil k/e \rceil$ & $t(k)$ \\ \hline
        16  &   6   &   6   \\
        17$^\ast$&   7   &   6   \\
        18  &   7   &   7   \\
        19 &    7 &   7 \\
        \hline

    \end{tabular}
    \end{minipage}
    \caption{Consider the range $4 \le k \le  19$. The asterisk $^\ast$ denotes instances where $t(k) < \lceil k/e \rceil$.}\label{table}
\end{table}

As observed in \cite{chao2024entropymeetsturannew}, results such as Theorem~\ref{THM:Main} can be extended to determine exact Tur\'an densities for certain $\lambda$-tents $\Delta_\lambda$. 
The following corollary provides the complete statement of this extension.
For any partition $\lambda = (\lambda_1,\ldots,\lambda_\ell)$ of $k$, let $\Sigma_\lambda \coloneqq \big\{ \sum_{i\in A} \lambda_i : A \subseteq [\ell] \big\}$ denote its subset sum set.

\begin{corollary}\label{Coro:Main-2}
Let $k \geq 2$, and let $ t = t(k)$ be defined as in~\eqref{equ:t(k)}.
If $\lambda$ is a partition of $k$ satisfying $[t]\subseteq \Sigma_\lambda$, then $\pi (\Delta_\lambda)=k!/k^k$.
\end{corollary}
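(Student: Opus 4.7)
My plan is to establish the two matching bounds $\pi(\Delta_\lambda)\ge k!/k^k$ and $\pi(\Delta_\lambda)\le k!/k^k$ separately, with the lower bound being elementary and the upper bound reducing to Theorem~\ref{THM:Main}.

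For the lower bound --- which in fact holds for every partition $\lambda$ of $k$, with no subset-sum hypothesis --- I would use the standard rainbow construction: partition $[n]$ into $k$ nearly equal color classes $V_1,\ldots,V_k$ and let $H$ be the $k$-graph of all transversals, giving $|E(H)|=(1-o(1))\frac{k!}{k^k}\binom{n}{k}$. To verify $\Delta_\lambda\not\subseteq H$, I would suppose a copy with apex $v$ of some color $c$. Each companion $e_i$ contains $v$ and is a transversal, so $e_i\setminus\{v\}$ omits color $c$, and hence so does $e\cap e_i$. Since the sets $e\cap e_i$ partition $e$, the edge $e$ itself omits color $c$, contradicting that $e$ is a transversal.

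For the upper bound, my plan is to invoke Theorem~\ref{THM:Main} via a supersaturation-plus-greedy reduction. Suppose $H$ is a $\Delta_\lambda$-free $k$-graph on $n$ vertices with $|E(H)|>(k!/k^k+\eta)\binom{n}{k}$ for some $\eta>0$ and large $n$. Applying Theorem~\ref{THM:Main} to the family $\mathcal{F}_k^{\le t}$ and upgrading by standard supersaturation would produce $\Omega(n^{2k-1})$ copies of $\Delta_{(k-i,i)}$ in $H$ aggregated over $i\in[t]$. Averaging over these copies yields an apex-base pair $(v,e)$ with $v\notin e\in E(H)$ whose link $\{S\subseteq e:\exists\, f\in E(H)\text{ with }v\in f,\ f\cap e=S\}$ is rich, with correspondingly large link counts $d(v,e,S)$. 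The subset-sum hypothesis $[t]\subseteq\Sigma_\lambda$ now enters: for each $i\in[t]$ I fix a witness $A_i\subseteq[\ell]$ with $\sum_{j\in A_i}\lambda_j=i$, so that the collection $\{A_i\}_{i\in[t]}$ encodes how a would-be $\Delta_\lambda$ at $(v,e)$ coarsens to a $\Delta_{(k-i,i)}$-configuration. A greedy argument then assembles a partition $e=\bigsqcup_{j\in[\ell]}S_j$ with $|S_j|=\lambda_j$ and companion edges $e_j\ni v$ with $e_j\cap e=S_j$ and pairwise disjoint ``private'' parts $e_j\setminus(\{v\}\cup e)$, producing $\Delta_\lambda\subseteq H$ and the desired contradiction.

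The main obstacle I anticipate is ensuring sufficient link density at the correct overlap sizes, especially when some $\lambda_j$ lies in the ``middle range'' $(t,k-t)$, where Theorem~\ref{THM:Main} gives no direct control on link counts. The subset-sum hypothesis is precisely what should bypass this: every middle-range $\lambda_j$ must appear inside a subset sum reaching some $i\in[t]$, so the argument can be reorganized so that only link counts with $|S|\in[t]$ (and by complementation $|S|\in\{k-i:i\in[t]\}$) need to be robustly large, while the $\lambda_j$-overlaps are then carved out of these larger subsets. The remaining bookkeeping --- simultaneous disjointness of all $\ell$ private parts --- is routine provided $n$ is taken much larger than $k$.
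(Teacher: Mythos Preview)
Your lower-bound argument is correct and essentially matches the paper's. The upper-bound plan, however, has a genuine gap and overlooks a one-line reduction that the paper uses instead.

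The gap is in your treatment of middle-range parts. You assert that ``every middle-range $\lambda_j$ must appear inside a subset sum reaching some $i\in[t]$,'' but this is impossible: if $\lambda_j>t$ then any subset $A\ni j$ already has $\sum_{j'\in A}\lambda_{j'}>t$. Take $k=10$, $t(10)=4$, $\lambda=(5,2,2,1)$; then $[4]\subseteq\Sigma_\lambda$ so the corollary applies, yet $\lambda_1=5$ lies in the middle range $(4,6)$. Supersaturation for the family $\mathcal{F}_{10}^{\le 4}$ only guarantees $\Omega(n^{19})$ copies of \emph{some} $\Delta_{(10-i,i)}$ with $i\in\{1,2,3,4\}$, so at a popular pair $(v,e)$ you control link counts only at overlap sizes in $\{i_0,10-i_0\}$ for one $i_0$, and in any case never at overlap $5$. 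There is no way to ``carve'' an overlap-$5$ companion edge out of an overlap-$6$ one, so the greedy assembly cannot produce the required $e_1$ with $|e_1\cap e|=5$. A second, independent issue: even for the small parts $\lambda_j\le t$ you would need link richness at several overlap sizes simultaneously, whereas supersaturation for a family hands you only one.

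The paper bypasses all of this by running the subset-sum hypothesis in the opposite direction. For each $i\in[t]$, choose $A_i\subseteq[\ell]$ with $\sum_{j\in A_i}\lambda_j=i$ and collapse the companion edges $\{e_j:j\in A_i\}$ onto one edge of $\Delta_{(k-i,i)}$ and $\{e_j:j\notin A_i\}$ onto the other; this defines a homomorphism $\Delta_\lambda\to\Delta_{(k-i,i)}$. Since such a homomorphism exists for every $i\in[t]$, Lemma~\ref{lem:Hom} gives $\pi(\Delta_\lambda)\le\pi(\mathcal{F}_k^{\le t})$, and Theorem~\ref{THM:Main} finishes. If you wish to salvage a supersaturation route, the clean fix is: $\Omega(n^{2k-1})$ copies of a single $\Delta_{(k-i_0,i_0)}$ force a blow-up of it in $H$, and $\Delta_\lambda$ embeds in that blow-up precisely via the homomorphism above --- but this again rests on the paper's observation, not on your greedy link argument.
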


In particular, this shows that $\pi(\Delta_{(6,2,1)})=9!/9^9$ and for any $q \leq k - t(k)$, $\pi (\Delta_{(q,1,\ldots,1)})=k!/k^k$.

\section{The entropy method of Chao-Yu}
\noindent
In this section, we give a brief overview of the entropy method developed by Chao and Yu~\cite{chao2024entropymeetsturannew}, emphasizing the results most relevant to our presentation.

We begin by introducing some fundamental concepts of entropy.
For any discrete random variable $X$, we denote by $p_X(x)$ the probability $\mathbb{P}(X = x)$.
The {\it Shannon entropy} of $ X $ is defined as
$$\mathbb{H}(X) \coloneqq -\sum_{x \in \mathrm{supp}(X)} p_X(x) \cdot \log_2 p_X(x),$$
where $\mathrm{supp}(X)$ denotes the support of $X$, i.e., the set of all $x$ with $p_X(x) > 0$.
Let $X,Y, X_1, X_2, \ldots, X_n$ be discrete random variables.
We write $\mathbb{H}(X_1, X_2, \ldots, X_n)$ for the entropy of the joint distribution of the random tuple $(X_1, X_2, \ldots, X_n)$.
The {\it conditional entropy} of $X$ given $Y$ is defined as $$\mathbb{H}(X \mid Y) \coloneqq \mathbb{H}(X, Y) - \mathbb{H}(Y).$$

Let $G$ be a $k$-graph.
A $k$-tuple of random vertices $(X_1,\ldots, X_k )\in V(G)^k$ is a {\it random edge with uniform ordering} on $G$,
if $(X_1,\ldots, X_k )$ is symmetric\footnote{That is, the distribution of $(X_{\sigma_1},\ldots, X_{\sigma_k} )$ is always the same for any permutation $\sigma$ of $[k]$.}, and $\{ X_1,\ldots, X_k\}$ is always an edge of $G$.
Using this concept, Chao and Yu \cite{chao2024entropymeetsturannew} established a novel characterization of Tur\'an density.
For two $k$-graphs $F$ and $G$, a {\it homomorphism} from $F$ to $G$ is a mapping $f \colon V(F) \to V(G)$ such that for every edge $e$ in $F$, its image $f(e)$ is an edge in $G$.
Given a family $\mathcal{F}$ of $k$-graphs, we say a $k$-graph $G$ is {\it $\mathcal{F}$-hom-free}, if there exists no homomorphism from any member $F \in \mathcal{F}$ to $G$.

\begin{lemma}[Chao-Yu, {\cite[Corollary 5.6]{chao2024entropymeetsturannew}}]\label{lem:pi}
For any family $\mathcal{F}$ of $k$-graphs, $\pi (\mathcal{F})$ is the supremum of $$2^{\mathbb{H}(X_1,\ldots, X_k )-k\mathbb{H}(X_1)}$$ for any random edge with uniform ordering $(X_1,\ldots, X_k)$ on any $\mathcal{F}$-hom-free $k$-graph $G$.
\end{lemma}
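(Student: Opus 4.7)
The plan is to prove the two inequalities $\pi(\mathcal{F}) \ge \sup$ and $\pi(\mathcal{F}) \le \sup$ separately, exploiting a blow-up/entropy correspondence. For the lower bound, I would take any $\mathcal{F}$-hom-free $k$-graph $G$ with random edge $(X_1,\ldots,X_k)$ and construct a weighted blow-up $G^*$ on $n$ vertices by assigning each $v \in V(G)$ a part of size $\lfloor p_{X_1}(v)\cdot n\rfloor$, with $k$-tuples forming edges whenever their vertex-images in $G$ form an edge. Since $G^*$ admits a natural homomorphism to $G$, it inherits $\mathcal{F}$-hom-freeness and hence $\mathcal{F}$-freeness. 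A short Jensen's inequality argument, using the symmetry identity $\mathbb{H}(X_1,\ldots,X_k) = \log_2 k! - \sum_e w(e)\log_2 w(e)$ (for unordered edge weights $w(e) = \mathbb{P}(\{X_1,\ldots,X_k\}=e)$) together with the marginal relation $\sum_{e\ni v}w(e) = k\,p_{X_1}(v)$, shows that the edge density of $G^*$ tends to at least $2^{\mathbb{H}(X_1,\ldots,X_k)-k\mathbb{H}(X_1)}$ as $n\to\infty$, delivering the bound.

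For the upper bound, I would pick a sequence of $\mathcal{F}$-free $k$-graphs $G_n$ with edge density tending to $\pi(\mathcal{F})$ and feed the uniform random ordered edge on $G_n$ into the formula: uniformity gives $\mathbb{H}(X_1,\ldots,X_k) = \log_2(k!\cdot|E(G_n)|)$ and $\mathbb{H}(X_1)\le \log_2|V(G_n)|$, so the quantity $2^{\mathbb{H}(X_1,\ldots,X_k)-k\mathbb{H}(X_1)}$ dominates the edge density $k!\,|E(G_n)|/|V(G_n)|^k \to \pi(\mathcal{F})$. The caveat is that the lemma requires the random edge to live on an $\mathcal{F}$-\emph{hom}-free graph, while $G_n$ is only assumed $\mathcal{F}$-free.

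The main obstacle is precisely this hom-free upgrade. The resolution is a separate blow-up argument: if $G_n$ admits a (necessarily non-injective) homomorphism from some $F\in\mathcal{F}$, then a sufficiently large blow-up of $G_n$ contains $F$ as a subgraph, obtained by lifting collapsed vertices of $F$ to distinct vertices of the corresponding parts. Since iterated blow-ups preserve edge density asymptotically, this forces the extremal sequence to be $\mathcal{F}$-hom-free without asymptotic loss, completing the proof. A secondary technical point is absorbing the rounding of $\lfloor p_{X_1}(v) n\rfloor$ in the weighted blow-up for the lower bound, which is handled by standard limiting arguments as $n \to \infty$.
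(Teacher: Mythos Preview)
The paper does not give its own proof of this lemma; it is simply quoted from Chao and Yu \cite[Corollary~5.6]{chao2024entropymeetsturannew}, so there is no in-paper argument to compare against.

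Your outline is the standard approach and is correct in substance. The lower bound via a weighted blow-up together with the Gibbs/Jensen step is fine. For the upper bound, your hom-free upgrade is also essentially right, but the phrasing ``forces the extremal sequence to be $\mathcal{F}$-hom-free'' is misleading: an extremal $\mathcal{F}$-free graph need not itself be $\mathcal{F}$-hom-free (e.g.\ an extremal $\{2K_3\}$-free graph may well contain a triangle). What your blow-up observation actually yields is the standard identity $\pi(\mathcal{F})=\pi(\mathcal{F}^{\downarrow})$, where $\mathcal{F}^{\downarrow}$ is the family of all homomorphic images of members of $\mathcal{F}$ (so that $\mathcal{F}^{\downarrow}$-free is the same as $\mathcal{F}$-hom-free): every $F'\in\mathcal{F}^{\downarrow}$ satisfies $F\subseteq (F')^{(t)}$ for some $F\in\mathcal{F}$ and some $t$, and blow-up invariance of Tur\'an density (via supersaturation) then gives $\pi(\mathcal{F})\le\pi(\mathcal{F}^{\downarrow})$, while the reverse inequality is immediate from $\mathcal{F}\subseteq\mathcal{F}^{\downarrow}$. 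One then feeds the uniform random ordered edge on an extremal $\mathcal{F}^{\downarrow}$-free (hence $\mathcal{F}$-hom-free) graph into the entropy formula, exactly as you describe.
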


The {\it ratio sequence} $(x_1, \ldots, x_k )$ of a random edge $(X_1, \ldots, X_k )$ with uniform ordering on $G$
is given by $x_i=2^{\mathbb{H}(X_i|X_{i+1},\ldots, X_k )-\mathbb{H}(X_i)}$ for all $i\in [k].$
It is evident to see that $2^{\mathbb{H}(X_1,\ldots,X_k) - k\mathbb{H}(X_1)}=\prod_{i=1}^k x_i$
and $x_k = 1$.
The following result on ratio sequences is implicit in the proof of Lemma~7.2 of \cite{chao2024entropymeetsturannew}.

\begin{lemma}[Chao-Yu, {\cite[Lemma~7.2]{chao2024entropymeetsturannew}}]\label{LEMMA:CY-CONSTRAINT}
Let $1\le t \le \lfloor k/2 \rfloor$.
Let $G$ be any $\mathcal{F}_k^{\le t}$-hom-free $k$-graph, and let $(x_1,\ldots, x_k )$ be the ratio sequence of any random edge with uniform ordering on $G$. Then $x_i+x_j\le x_{i+j}$ holds for all $i,j$ with $i\in [t]$ and $1\leq i+j\le k$.
\end{lemma}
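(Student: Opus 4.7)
\medskip\noindent\textbf{Proof plan.} Fix $i\in[t]$ and $j\ge 1$ with $i+j\le k$; the goal is to derive $x_i+x_j\le x_{i+j}$ from the $\Delta_{(k-i,i)}$-hom-freeness of $G$. By the uniform-ordering symmetry, $\mathbb{H}(X_s\mid X_{s+1},\ldots,X_k)$ depends only on $k-s$; writing $h_m:=\mathbb{H}(X_1\mid X_2,\ldots,X_{m+1})$, the target reduces to the entropic inequality
\[
2^{h_{k-i}}+2^{h_{k-j}}\;\le\;2^{h_{k-i-j}}.
\]
The strategy has two parts: a \emph{base case} $j=k-i$ (equivalently $k-i-j=0$) proved by a direct coupling, followed by a \emph{reduction} of the general case to the base case by conditioning on a tail of length $k-i-j$.

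\smallskip\noindent For the base case, we must show $2^{h_i}+2^{h_{k-i}}\le 2^{h_0}$. Given the random edge $E=(X_1,\ldots,X_k)$, I sample two edges conditionally independently given $E$: an ``$i$-extension'' $E_1=(X_1,\ldots,X_i,Y_{i+1},\ldots,Y_k)$ that is a random edge sharing the first $i$ coordinates with $E$, and a ``$(k-i)$-extension'' $E_2=(Z_1,\ldots,Z_i,X_{i+1},\ldots,X_k)$ sharing the last $k-i$. If $E_1\cap E_2$ contains any vertex outside $E$, then the triple $(E,E_1,E_2)$ realizes a $\Delta_{(k-i,i)}$-tent homomorphism into $G$, contradicting the hypothesis. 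Hence, conditionally on $E$, the supports of $Y_{i+1}$ and $Z_1$ restricted to $V(G)\setminus E$ are disjoint. The entropy-to-support-size bridge (Jensen's inequality applied to $2^{\mathbb H(\cdot\mid\cdot)}\le \mathbb E|\mathrm{supp}(\cdot\mid\cdot)|$) identifies $\mathbb E|\mathrm{supp}(Y_{i+1}|\cdot)|\ge 2^{h_i}$ and $\mathbb E|\mathrm{supp}(Z_1|\cdot)|\ge 2^{h_{k-i}}$, while the disjointness confines their union to a universe of effective size $2^{h_0}$, yielding $2^{h_i}+2^{h_{k-i}}\le 2^{h_0}$.

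\smallskip\noindent For the reduction (general $j<k-i$), I condition on the tail $T:=(X_{i+j+1},\ldots,X_k)$ of length $k-i-j$. For each realisation $T=\tau$, the conditional distribution of $(X_1,\ldots,X_{i+j})$ is a symmetric random edge of the $(i+j)$-uniform link $G_\tau$, and this link inherits $\Delta_{(j,i)}$-hom-freeness from $G$: a $\Delta_{(j,i)}$-tent in $G_\tau$ lifts (by adjoining $\tau$ to each edge) to three $k$-edges of $G$ together with an explicit $\Delta_{(k-i,i)}$-homomorphism in which the enlarged overlaps $A\cup\tau$ and $B\cup\tau$ are absorbed into the fillers of the tent edges while per-edge injectivity is preserved. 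Applying the base case inside $G_\tau$ yields the pointwise bound
\[
2^{\tilde h_j^\tau}+2^{\tilde h_i^\tau}\;\le\;2^{\tilde h_0^\tau},
\]
where $\tilde h_m^\tau:=\mathbb H(X_1\mid X_2,\ldots,X_{m+1},T=\tau)$. Using the identities $\mathbb E_\tau \tilde h_j^\tau=h_{k-i}$, $\mathbb E_\tau \tilde h_i^\tau=h_{k-j}$, $\mathbb E_\tau \tilde h_0^\tau=h_{k-i-j}$ (direct consequences of uniform-ordering symmetry, since conditioning on $m$ link-coordinates together with $\tau$ is conditioning on $m+(k-i-j)$ edge-coordinates), together with the convexity of the log-sum-exp map $(u,v)\mapsto \log(2^u+2^v)$, the pointwise bound averages to
\[
\log(2^{h_{k-i}}+2^{h_{k-j}})\;\le\;\mathbb E_\tau\log(2^{\tilde h_j^\tau}+2^{\tilde h_i^\tau})\;\le\;\mathbb E_\tau\tilde h_0^\tau\;=\;h_{k-i-j},
\]
which is the target inequality.

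\smallskip\noindent\textbf{Main obstacle.} The main technical step is the hom-freeness inheritance in $G_\tau$: one must explicitly construct the lifted $\Delta_{(k-i,i)}$-homomorphism $f$ from a $\Delta_{(j,i)}$-tent in $G_\tau$, mapping the $i$-block of the tent base to $A$, the $(k-i)$-block to $B\cup\tau$, the apex to the lifted apex $v'$, and distributing the exclusive filler vertices across $\{w_1,\ldots,w_{j-1}\}\cup\tau$ and $\{z_1,\ldots,z_{i-1}\}$ respectively, while verifying injectivity on each of the three edges. Once this inheritance is in place, the base-case coupling and the log-sum-exp Jensen averaging combine cleanly to give the inequality.
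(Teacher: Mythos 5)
This lemma is not actually proved in the paper --- it is quoted from Chao--Yu as implicit in their Lemma~7.2 --- so your attempt can only be measured against the known entropy argument. Your overall architecture is sound: handling the extremal case $i+j=k$ first, then conditioning on the tail $(X_{i+j+1},\ldots,X_k)$, observing that the link of the tail inherits hom-freeness of the $(i+j)$-uniform tent with parts $i$ and $j$ (your lifting of a tent in $G_\tau$ back to a $\Delta_{(k-i,i)}$-homomorphism in $G$ is correct), using the tower identities $\mathbb{E}_\tau \tilde h^\tau_j=h_{k-i}$, $\mathbb{E}_\tau \tilde h^\tau_i=h_{k-j}$, $\mathbb{E}_\tau \tilde h^\tau_0=h_{k-i-j}$, and averaging the pointwise bound via convexity of $(u,v)\mapsto\log(2^u+2^v)$.

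The genuine gap is in your base case. The sentence ``the disjointness confines their union to a universe of effective size $2^{h_0}$'' is not a valid entropy step: support-size bounds such as $2^{\mathbb{H}(\cdot\mid\cdot)}\le\mathbb{E}|\mathrm{supp}(\cdot\mid\cdot)|$ only relate entropies to supports, and the universe containing both conditional supports is $\mathrm{supp}(X_1)$ (or $V(G)\setminus E$), whose cardinality can exceed $2^{\mathbb{H}(X_1)}$ by an arbitrary factor; your route would only yield $2^{h_i}+2^{h_{k-i}}\le|\mathrm{supp}(X_1)|$, which is the trivial ``uniform'' bound and not the claim. What is needed is the merging/mixture argument. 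Keep your coupling ($E_1$ resampled beyond the first $i$ coordinates, $E_2$ beyond the last $k-i$), flip an independent coin $P$ with bias $p$ and set $W:=Y_{i+1}$ if $P=1$ and $W:=Z_1$ if $P=2$. Conditioned on $E$, the supports of $Y_{i+1}$ and $Z_1$ are disjoint on all of $V(G)$: inside $E$ one can only hit the last $k-i$ vertices and the other only the first $i$, and outside $E$ disjointness is exactly the hom-freeness consequence you identified. Hence $P$ is determined by $(W,E)$, and since $Y_{i+1}$ (resp.\ $Z_1$) is conditionally independent of the rest of $E$ given the shared coordinates, $\mathbb{H}(W\mid E)=h(p)+p\,h_i+(1-p)\,h_{k-i}$, where $h(p)$ is the binary entropy. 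On the other hand $W$ has the same marginal distribution as $X_1$, so $\mathbb{H}(W\mid E)\le\mathbb{H}(W)=\mathbb{H}(X_1)=h_0$; maximizing over $p$, using $\max_{p}\{h(p)+pa+(1-p)b\}=\log_2(2^a+2^b)$, gives $2^{h_i}+2^{h_{k-i}}\le 2^{h_0}$. With this replacement for the base case, your reduction step does complete the proof; as written, the crucial entropic inequality is asserted rather than proved.
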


We also need the following well-known property on Tur\'an densities.

\begin{lemma}\label{lem:Hom}
    Let $\mathcal{F},\mathcal{G}$ be two family of $k$-graphs. If for every $G\in \mathcal{G}$, there exists $F \in \mathcal{F}$ such that there exists a homomorphism from $F$ to $G$, then $\pi (\mathcal{F}) \leq \pi(\mathcal{G})$.
\end{lemma}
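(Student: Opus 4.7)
My plan is to leverage the entropy characterization of Tur\'an density in Lemma~\ref{lem:pi}, which expresses $\pi(\mathcal{H})$, for any family $\mathcal{H}$ of $k$-graphs, as a supremum of an entropy functional taken over random edges with uniform ordering on $\mathcal{H}$-hom-free $k$-graphs. If I can show that every $\mathcal{F}$-hom-free $k$-graph is automatically $\mathcal{G}$-hom-free, then the supremum defining $\pi(\mathcal{F})$ is taken over a sub-collection of those defining $\pi(\mathcal{G})$, which will immediately yield $\pi(\mathcal{F}) \leq \pi(\mathcal{G})$.

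Establishing this class containment is a short diagram chase, which I will carry out by contrapositive. Suppose $H$ is a $k$-graph that is \emph{not} $\mathcal{G}$-hom-free, so there exist $G \in \mathcal{G}$ and a homomorphism $\phi\colon G \to H$. By the hypothesis of the lemma, there exist $F \in \mathcal{F}$ and a homomorphism $\psi\colon F \to G$. Since the composition of two homomorphisms is again a homomorphism, the map $\phi \circ \psi\colon F \to H$ witnesses that $H$ is not $\mathcal{F}$-hom-free. Invoking Lemma~\ref{lem:pi} for both $\mathcal{F}$ and $\mathcal{G}$ and comparing the two suprema then completes the proof.

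The main obstacle is essentially nil given the tools already developed in the excerpt: once Lemma~\ref{lem:pi} is in hand, the whole argument reduces to a one-line composition of homomorphisms. The only conceptual subtlety is that $\pi$ is defined via subgraph containment while the argument uses homomorphism-freeness, and bridging that gap is precisely the content of Lemma~\ref{lem:pi}. Were Lemma~\ref{lem:pi} unavailable, the standard self-contained alternative would be to embed each $F$ into the $|V(F)|$-fold balanced blow-up of its corresponding $G$ (by sending distinct vertices of $F$ to distinct copies within the blobs prescribed by the given homomorphism $F \to G$), and then invoke the classical hypergraph blow-up density invariance $\pi(G^{(t)}) = \pi(G)$, which is proved via supersaturation; that alternative route would make supersaturation the main technical ingredient.
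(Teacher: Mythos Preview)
Your argument is correct. The paper does not supply a proof of Lemma~\ref{lem:Hom}; it merely cites it as a ``well-known property on Tur\'an densities,'' so there is nothing to compare against directly.

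Your route via Lemma~\ref{lem:pi} is valid but somewhat heavy-handed: you are invoking the Chao--Yu entropy characterization to establish a fact that is considerably more elementary than the machinery being used. The standard argument (which you correctly sketch at the end) --- embed each $F$ injectively into a balanced blow-up of its target $G$ via the given homomorphism, then appeal to blow-up invariance of Tur\'an density / supersaturation --- is the textbook proof and is what is implicitly being pointed to when the paper calls the lemma well-known. Either proof is acceptable; just be aware that the first one creates a dependency of an elementary lemma on a more sophisticated one, which is harmless here since both are already imported, but would be an odd choice in a self-contained write-up.
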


\section{Proof of the results}
\noindent
The proof of our main theorem relies on the following key lemma.
We note that the statement with $t(k)$ replaced by $\lceil k/e \rceil$ was proved by Il'kovi\v{c} and Yan in \cite[Theorem~3.1]{ilkovivc2025improved}. Here we provide a much shorter proof with a slightly better bound.

\begin{lemma}\label{LEMMA:Final-Inequality}
    Let $k\geq 2$ and $t:=t(k)$ be defined as in~\eqref{equ:t(k)}. Suppose that $y_1,\ldots y_k$ are some nonnegative real numbers with $y_i+y_j\le y_{i+j}$ for all $i\in [t]$ and $1\leq i+j\le k $. Then
    $\prod_{i=1}^k y_i \le \frac{k!}{k^k}y_k^k.$
\end{lemma}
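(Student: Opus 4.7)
The plan is a reduction to a partition-based weighted AM-GM. First, handle the trivial case $y_k = 0$ (else iterating $y_i + y_{k-i} \le y_k$ with $i \le t$ forces all $y_i = 0$), and use the $y_k^k$-homogeneity of the claim to introduce the rescaled quantities $r_m := y_m/m$ for each $m \in [k]$, so that the target inequality is equivalent to $\prod_{m=1}^k r_m \le r_k^k$.

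The core pairwise inequality is $r_i^i r_j^j \le r_{i+j}^{i+j}$ whenever $i+j \le k$ and $\min(i,j) \le t$. This follows by applying weighted AM-GM to $y_i = i r_i$ and $y_j = j r_j$ with weights $i$ and $j$:
\[
r_i^{i/(i+j)} r_j^{j/(i+j)} \le \frac{i r_i + j r_j}{i+j} = \frac{y_i + y_j}{i+j} \le \frac{y_{i+j}}{i+j} = r_{i+j},
\]
and raising to the $(i+j)$-th power. Iterating, for any \emph{valid} partition $k = i_1 + \cdots + i_\ell$ of $k$ — meaning at most one part exceeds $t$, so that at each step of the combination one can always pair the partial sum against a summand $\le t$ — we obtain $\prod_{j=1}^\ell r_{i_j}^{i_j} \le r_k^k$.

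To finish, I would produce nonnegative weights $c_P$ on valid partitions $P$ of $k$ satisfying $\sum_P c_P = 1$ and $\sum_P c_P \, n_P(m) = 1/m$ for every $m \in [k]$, where $n_P(m)$ denotes the multiplicity of $m$ in $P$. Given such weights, taking the weighted geometric-mean combination of the partition inequalities yields $\prod_m r_m \le r_k^k$, which rearranges to the stated bound. The construction allocates total mass $1/b$ to partitions having large part $b$ for each $b \in \{t+1, \ldots, k\}$ — this automatically fulfills the marginal condition for $m > t$ — and places the residual mass $M_0 := 1 - \sum_{b=t+1}^k 1/b$, which is nonnegative precisely by the defining property of $t(k)$, on partitions of $k$ into parts $\le t$. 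The remaining within-bucket freedom is then used to realize the marginals $\sum_P c_P n_P(i) = 1/i$ for $i \le t$; the aggregate balance $\sum_b \mu_b(k-b) = t$ (where $\mu_b$ is the total mass at bucket $b$) matches $\sum_{i \le t} i \cdot 1/i = t$, turning this into a transportation-type subproblem.

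I expect the main obstacle to be this final construction: one must verify that the demanded within-bucket expected multiplicities are realized by genuine integer partitions of $k-b$ into parts $\le t$ (not merely by fractional multiplicity vectors). The flexibility provided by multiple small-part choices — for instance, mixing rests of the form $(1, \ldots, 1)$ with rests like $(t, \ldots, t, r)$ for each big part $b$, together with small partitions of the shape $(2^j, 1^{k-2j})$ and its generalizations — should suffice, but requires careful bookkeeping. The harmonic inequality $\sum_{i=t+1}^k 1/i \le 1$ plays a Hall-type role: it bounds the total ``demand'' on the big-part buckets by the available mass and is in fact necessary for the existence of such weights, since each valid partition contributes at most $1$ to $\sum_{i > t} n_P(i)$.
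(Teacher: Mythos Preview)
Your reduction is essentially the paper's. Both proofs hinge on exhibiting nonnegative weights $c_P$ on ``valid'' partitions $P\vdash k$ (at most one part exceeding $t$) with $\sum_P c_P=1$ and $\sum_P c_P\,n_P(m)=1/m$ for every $m\in[k]$; this is exactly the content of the paper's Lemma~\ref{LEMMA:Algorithm}, phrased there as decomposing the multiset $T=\{1^{(k!)},2^{(k!/2)},\ldots,k^{(k!/k)}\}$ into $k!$ valid partitions, i.e.\ taking all $c_P$ in $\frac{1}{k!}\mathbb{Z}_{\ge 0}$. The paper then uses the \emph{additive} per-partition inequality $\sum_{j\in A_i} y_j\le y_k$ (immediate from iterating $y_a+y_b\le y_{a+b}$ with $\min(a,b)\le t$), averages to get $\sum_{m} y_m/m\le y_k$, and applies a single AM--GM at the end. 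Your multiplicative variant $\prod_j r_{i_j}^{i_j}\le r_k^k$ works too but costs an extra weighted AM--GM at every merging step; the additive route is strictly simpler and lands at the same place.

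The genuine gap in your proposal is the construction itself. You correctly isolate it as the crux, correctly observe that the harmonic condition $\sum_{b>t}1/b\le 1$ is the Hall-type obstruction, and outline a bucket-by-bucket transportation scheme --- but you do not carry it out, and the parenthetical menu of candidate small-part rests is not a proof. The paper dispatches this in a few lines with a greedy argument: list the elements of $T$ in nonincreasing order, seed each of the $m=\big(\sum_{b>t}1/b\big)k!\le k!$ big elements into its own set, then insert the remaining small elements one at a time. The only thing to check is that some set always has room for the current element $z\le t$; this follows from the counting identity $\omega(T)=k\cdot k!$ together with the fact that the still-unplaced elements of value $<z$ contribute exactly $(z-1)k!$ to $\omega(T)$, which forces some current set to have weight $\le k-z$. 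That short pigeonhole is the piece your ``careful bookkeeping'' is standing in for.
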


We begin by showing how Lemma~\ref{LEMMA:Final-Inequality} implies Theorem~\ref{THM:Main}.

\begin{proof}[\bf Proof of Theorem~\ref{THM:Main}, using Lemma~\ref{LEMMA:Final-Inequality}.]
    First, observe that any complete $k$-partite $k$-graph is $\mathcal{F}_k^{\le t}$-hom-free, which yields the lower bound $\pi(\mathcal{F}_k^{\le t})\ge k!/k^k$.
    Now, let $G$ be any $\mathcal{F}_k^{\le t}$-free $k$-graph, and let $(x_1,\ldots, x_k )$ be the ratio sequence of any random edge with uniform ordering on $G$.
    Applying Lemmas~\ref{LEMMA:CY-CONSTRAINT} and \ref{LEMMA:Final-Inequality} under the condition $x_k=1$, we deduce that $\prod_{i\in [k]}x_i\leq k!/k^k$.
    By Lemma~\ref{lem:pi}, this immediately gives $\pi(\mathcal{F}_k^{\le t})\leq k!/k^k$, thereby completing the proof.
\end{proof}

Using homomorphism properties of $\lambda$-tents, we derive Corollary~\ref{Coro:Main-2} from Theorem~\ref{THM:Main} as follows.

\begin{proof}[\bf Proof of Corollary~\ref{Coro:Main-2}.]
Let $k\geq 2$ and $t:=t(k)$ be defined as in~\eqref{equ:t(k)}.
Let $\lambda=(\lambda_1,\ldots,\lambda_\ell)$ be a partition of $k$ such that $[t]\subseteq \Sigma_\lambda$.
Recall that $\Delta_\lambda$ has an apex vertex $v$ and edges $e, e_1, \ldots, e_\ell$ satisfying $|e_j \cap e| = \lambda_j$ for each $j \in [\ell]$.
The lower bound $\pi (\Delta_\lambda)\geq k!/k^k$ follows easily by the fact that any complete $k$-partite $k$-graph is $\Delta_\lambda$-hom-free.
For the upper bound, we claim that for any fixed $i\in [t]$, there exists a homomorphism from $\Delta_\lambda$ to the $(k-i,i)$-tent $\Delta_{(k-i,i)}\in \mathcal{F}_k^{\le t}$.
Since we have $[t] \subseteq \Sigma_\lambda$, there is a subset $A\subseteq [k]$ such that $\sum_{j \in A} \lambda_j = i$.
According to the edge set \eqref{equ:i-tent} of $\Delta_{(k-i,i)}$, where $V(\Delta_{(k-i,i)})=[2k-1]$,
we can define a map $f: e \cup \{v\} \to [k+1]$ by setting $f(v) = k+1$ and bijectively mapping $e$ to $[k]$ such that
$f(\bigcup_{j \in A} (e \cap e_j)) = [i]$ and $f(\bigcup_{j \in [\ell]\setminus A} (e \cap e_j)) = [k]\setminus [i]$.
Since each vertex in $V(\Delta_\lambda)\setminus (e \cup \{v\})$ has degree exactly one,
it is easy to extend $f$ to a homomorphism $\tilde{f}: V(\Delta_\lambda) \to V(\Delta_{(k-i,i)})$, which preserves all edges.
This proves the claim.
By Lemma~\ref{lem:Hom} and Theorem~\ref{THM:Main}, we obtain the desired upper bound $\pi(\Delta_\lambda) \leq \pi(\mathcal{F}_k^{\le t}) = k!/k^k$.
\end{proof}

\subsection{Proof of Lemma~\ref{LEMMA:Final-Inequality}}
\noindent It remains to prove Lemma~\ref{LEMMA:Final-Inequality}.
We start with the following technical lemma.

\begin{lemma}\label{LEMMA:Algorithm}
Let $k \geq 2$, and let $t = t(k)$ be defined as in~\eqref{equ:t(k)}.
Consider the multiset
$$
T = \left\{ 1^{(k!)},\ 2^{(k! / 2)},\ \ldots,\ k^{(k! / k)} \right\},
$$
where each $i \in [k]$ appears with multiplicity $k! / i$.
Then there exist $k!$ multisets $A_1, \ldots, A_{k!}$ satisfying
    \begin{itemize}
        \item[(1).] $\bigcup_{i=1}^{k!} A_i = T$ and $A_i\cap A_j =\emptyset$ for all $1\le i<j\le k!$;
        \item[(2).] For each $i \in [k!]$, the summation (counting with multiplicities) $\omega(A_i) \coloneqq \sum_{x \in A_i} x$ equals $k$;
        \item[(3).] For each $i \in [k!]$, $A_i$ contains at most one element greater than $t$.
    \end{itemize}
\end{lemma}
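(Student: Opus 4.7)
The plan is to construct the multisets $A_1, \ldots, A_{k!}$ in two stages: first, place the \emph{big} elements (those of value greater than $t$) into distinct multisets; second, fill each multiset up to weight $k$ using \emph{small} elements (those of value at most $t$).

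For Stage~1, the defining property of $t=t(k)$ yields $\sum_{i=t+1}^{k} 1/i \le 1$, so the total number of big elements is $\sum_{i=t+1}^{k} k!/i \le k!$. This allows me to assign each big element to a distinct multiset, immediately guaranteeing condition~(3).

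For Stage~2, each multiset containing a big element $v$ has residual capacity $k-v \in \{0,1,\ldots,k-t-1\}$, while each big-free multiset has residual $k$. Summing these, the total residual is $tk!$, exactly matching the total weight $\sum_{i=1}^{t} i\cdot(k!/i) = tk!$ of the small-element pool. I would then prescribe a canonical filling rule: in the multiset containing big element $v$, insert $\lfloor (k-v)/t\rfloor$ copies of $t$ together with a single copy of $(k-v)\bmod t$ (when positive). The central inequality, using $\lfloor x\rfloor \le x$ and the defining property of $t$, is
\[
\sum_{i=t+1}^{k}\frac{\lfloor (k-i)/t\rfloor}{i}\;\le\;\frac{1}{t}\sum_{i=t+1}^{k}\frac{k-i}{i}\;=\;\frac{1}{t}\Bigl(k\sum_{i=t+1}^{k}\frac{1}{i}-(k-t)\Bigr)\;\le\;1,
\]
which caps the use of value $t$ by its available multiplicity $k!/t$. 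A similar case analysis would bound the use of each other small value $v<t$ by $k!/v$.

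The main obstacle will then be to verify that the leftover small elements, after Stage~2's canonical filling, can still be partitioned into sum-$k$ sub-multisets, one per big-free bucket. I expect this to follow from the abundance of value-$1$ elements (which remain plentiful throughout the process) together with a short greedy or exchange argument; in particular, any surplus of a value $v>1$ can be broken into $v$ copies of $1$ to rebalance multiplicities before the final partition.
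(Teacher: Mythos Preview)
Your two-stage outline starts the same way as the paper (seed each big element in its own bucket, using $\sum_{i=t+1}^k 1/i \le 1$), but Stage~2 as written breaks down. The displayed inequality bounds $\sum_{i=t+1}^k \lfloor(k-i)/t\rfloor/i$ by $1$; however, the number of copies of $t$ your canonical rule consumes is $k!$ times this sum, so you have only shown that this usage is at most $k!$, not at most the available multiplicity $k!/t$. Concretely, take $k=6$, so $t=2$: your rule puts one copy of $2$ into each of the $k!/4=180$ buckets holding a $4$ and into each of the $k!/3=240$ buckets holding a $3$, for a total of $420$ copies of $2$, whereas only $k!/2=360$ are present in $T$. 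Hence the canonical filling is infeasible already at value $t$, and the ``similar case analysis'' for $v<t$ and the final exchange step cannot rescue it: here you face a \emph{deficit} of $2$'s, not a surplus, and elements of $T$ cannot literally be broken into smaller pieces.

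The paper avoids all per-value bookkeeping with a short greedy argument. After seeding the big elements, it inserts the remaining small elements one at a time in nonincreasing order, and shows by a single weight count that some bucket always has room: if the next element has value $z\le t$ and every bucket already had weight at least $k+1-z$, then summing bucket weights and adding back the exactly-known tail of unplaced elements forces $\omega(T)>k\cdot k!$, a contradiction. This sidesteps the multiplicity constraints that trip up your Stage~2 and finishes the proof in a few lines; if you want to salvage your constructive approach, you would need a genuinely different filling rule (or an adaptive one) together with a correct bound of the form $\sum_{i=t+1}^k \lfloor(k-i)/t\rfloor/i \le 1/t$, which is false in general.
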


\begin{proof}
    Define $m\coloneqq \left( \sum_{i=t+1}^k \frac{1}{i} \right) \cdot k!$ and $N\coloneqq \left( \sum_{i=1}^k \frac{1}{i} \right) \cdot k!$.
    So $m\leq k!$, and the multiset $T$ contains $N$ elements, among which exactly $m$ elements are strictly greater than $t$.
    We order the elements of $T = \{x_1, \ldots, x_N\}$ such that $k = x_1 \geq x_2 \geq \cdots \geq x_m = t+1 > t=x_{m+1}  \geq \cdots \geq x_N = 1.$

    To obtain the desired multisets $A_1, \ldots, A_{k!}$, we apply induction to construct a nested sequence $A_i^{(0)} \subseteq A_i^{(1)} \subseteq \cdots \subseteq A_i^{(N-m)}=A_i$ for each $i \in [k!]$, satisfying the following for all $0 \le s \le N-m$:
    \begin{itemize}
        \item $\bigcup_{i=1}^{k!} A_i^{(s)} = \{x_1, x_2, \ldots, x_{m+s}\}$ and $A_i^{(s)} \cap A_j^{(s)} = \emptyset$ for all $1 \le i < j \le k!$;

        \item For each $i \in [k!]$, the summation $\omega(A_i^{(s)}) \le k$;

        \item Each $A_i^{(s)}$ contains at most one element greater than $t$.
    \end{itemize}

    We initialize the construction by defining
    $A_i^{(0)} = \begin{cases}  \{x_i\} & \text{for } 1 \leq i \leq m, \\ \emptyset & \text{for } m < i \leq k!
    \end{cases} $.
    
To proceed, we assume that for some $0 \leq s \leq N-m-1$, the collection $\{A_i^{(s)}\}_{i=1}^{k!}$ has been constructed to satisfy the inductive hypotheses.

Let the value of $x_{m+s+1}$ be $z\geq 1$. We claim that  there exists an index $p \in [k!]$ such that $\omega(A_p^{(s)}) \leq k - z$.
Suppose, for contradiction, that $\omega(A_i^{(s)}) \geq k + 1 - z$ for all $i \in [k!]$. 
By hypothesis, the sets $A_i^{(s)}$ form a partition of $\{x_1, \ldots, x_{m+s}\}$, and $T \setminus \{x_1, \ldots, x_{m+s},x_{m+s+1}\}$ contains exactly $\frac{k!}{j}$ copies of the element $j$ for every $1 \leq j \leq z-1$.
So we have
\begin{align*}
        k \cdot k! = \omega(T) = \sum_{i=1}^{k!} \omega(A_i^{(s)}) + \sum_{i=m+s+1}^{N} x_i
        \geq k! \cdot (k + 1 - z) + z + \sum_{j=1}^{z-1} \frac{k!}{j} \cdot j = k \cdot k! + z> k \cdot k!,
\end{align*}
which yields the desired contradiction. This proves the claim.

We can then define $
        A_i^{(s+1)} = \begin{cases}
        A_p^{(s)} \cup \{x_{m+s+1}\} & \text{if } i = p, \\
        A_i^{(s)} & \text{otherwise}
        \end{cases}
    $.
One can readily verify that these sets satisfy all inductive conditions.
Finally, setting $A_i = A_i^{(N-m)}$ for all $i \in [k!]$. 
We observe that $\omega(A_i) \leq k$ for each $i\in [k!]$ and $\sum_{i=1}^{k!} \omega(A_i) = \omega(T) = k \cdot k!$.
Consequently, it implies $\omega(A_i) = k$ for all $i \in [k!]$, thus proving Lemma~\ref{LEMMA:Algorithm}.
\end{proof}

Now we are ready to prove Lemma~\ref{LEMMA:Final-Inequality}.

\begin{proof}[\bf Proof of Lemma~\ref{LEMMA:Final-Inequality}.]
Let $T$ and $A_1,A_2,\ldots,A_{k!}$ be the multisets given by Lemma~\ref{LEMMA:Algorithm}. 
For any indices $1 \leq i_1,i_2,\ldots,i_r \leq t$ and $j \in [k]$ satisfying $i_1+i_2+\cdots+i_r+j \leq k$, 
using the lemma condition, we have the chain of inequalities
$$
y_{i_1}+y_{i_2}+\cdots+y_{i_r}+y_{j}
\leq y_{i_1}+y_{i_2}+\cdots+y_{i_{r-1}}+y_{i_r+j}
\leq \cdots \leq y_{i_1+i_2+\cdots+i_r+j}.
$$
Since each $A_i$ contains at most one element greater than $t$, we obtain the bound
$
\sum_{j\in A_i} y_j \leq y_{\omega(A_i)} = y_k $ for each $ i \in [k!].
$
Summing over all $i \in [k!]$, we further derive that
\[
\sum_{i=1}^k \frac{y_i}{i}
=\frac{1}{k!}\sum_{i=1}^k \frac{k!}{i}y_i 
=\frac{1}{k!}\sum_{j\in T} y_j
=\frac{1}{k!}\sum_{i=1}^{k!} \left(\sum_{j\in A_i} y_j\right) \leq y_k.
\]
Applying the AM-GM inequality to this gives
$\prod_{i=1}^k y_i = k!\prod_{i=1}^k \frac{y_i}{i} \le k!\left(\frac{\sum_{i=1}^k \frac{y_i}{i}}{k}\right)^k\le \frac{k!}{k^k}y_k^k.$
\end{proof}

We remark that the constant $t(k)$ in Lemma~\ref{LEMMA:Final-Inequality} cannot be reduced, even by one. 
The construction is essentially identical to that of Lemma 3.10 in \cite{ilkovivc2025improved}, so we omit the detailed proof.
This establishes that the specific value $t(k)$ in Theorem~\ref{THM:Main} is indeed optimal for the underlying entropy method.

\bigskip

{\noindent\bf Acknowledgments.}
We are grateful to Peter Frankl and Xizhi Liu for helpful discussions.

\end{document}